\def\Mat{\text{M}}
\def\GL{\text{GL}}
\def\id{\text{id}}
\newcommand{\Ker}{\operatorname{Ker}}
\newcommand{\Vect}{\operatorname{span}}
\newcommand{\im}{\operatorname{Im}}
\newcommand{\rk}{\operatorname{rk}}
\renewcommand{\setminus}{\smallsetminus}
\def\K{\mathbb{K}}
\def\R{\mathbb{R}}
\def\Q{\mathbb{Q}}
\def\N{\mathbb{N}}
\def\calG{\mathcal{G}}
\def\calH{\mathcal{H}}
\def\calL{\mathcal{L}}
\def\calM{\mathcal{M}}
\def\lcro{\mathopen{[\![}}
\def\rcro{\mathclose{]\!]}}
\theoremstyle{definition}
\newtheorem{Def}{Definition}
\theoremstyle{plain}
\newtheorem{theo}{Theorem}
\newtheorem{prop}[theo]{Proposition}
\newtheorem{lemme}[theo]{Lemma}
\theoremstyle{plain}
\theoremstyle{remark}
\newtheorem{Rems}{Remarks}
\newtheorem{ex}[Rems]{Example}
\title{The singular linear preservers of non-singular matrices
\footnote{E-mail address: dsp.prof@gmail.com}}
\author{Cl\'ement de Seguins Pazzis \\
\begin{footnotesize}
\emph{Lyc\'ee Priv\'e Sainte-Genevi\`eve, 2, rue
de l'\'Ecole des Postes, 78029 Versailles Cedex, FRANCE.}
\end{footnotesize}}
\begin{document}

\thispagestyle{plain}
\maketitle

\begin{abstract}
Given an arbitrary field $\K$, we reduce the determination of the singular endomorphisms $f$ of $\Mat_n(\K)$
such that $f(\GL_n(\K)) \subset \GL_n(\K)$ to the classification of $n$-dimensional division algebras over $\K$.
Our method, which is based upon Dieudonn\'e's theorem on singular subspaces of $\Mat_n(\K)$, also yields a proof for the classical non-singular case.
\end{abstract}

\vskip 2mm
\noindent
\emph{AMS Classification :} 15A86; 15A30, 16S50.

\vskip 2mm
\noindent
\emph{Keywords :} linear preservers, division algebras, general linear group, singular subspaces.

\section{Introduction}

Here, $\K$ will denote an arbitrary field and $n$ a positive integer.
We let $\Mat_{n,p}(\K)$ denote the set of matrices with $n$ rows, $p$ columns and entries in $\K$, and
$\GL_n(\K)$ the set of non-singular matrices in the algebra $\Mat_n(\K)$ of square matrices of order $n$.
The columns of a matrix $M \in \Mat_n(\K)$ will be written $C_1(M),C_2(M),\dots,C_n(M)$, so that
$$M=\begin{bmatrix}
C_1(M) & C_2(M) & \cdots & C_n(M)
\end{bmatrix}.$$
Given a vector space $V$, we let $\calL(V)$ denote the algebra of endomorphisms of $V$. \\
For non-singular $P$ and $Q$ in $\GL_n(\K)$, we define
$$u_{P,Q} : \begin{cases}
\Mat_n(\K) & \longrightarrow \Mat_n(\K) \\
M & \longmapsto P\,M\,Q
\end{cases}\quad \text{and} \quad v_{P,Q} :
\begin{cases}
\Mat_n(\K) & \longrightarrow \Mat_n(\K) \\
M & \longmapsto P\,M^t\,Q.
\end{cases}$$
Clearly, these are non-singular endomorphisms of the vector space $\Mat_n(\K)$
which map $\GL_n(\K)$ onto itself, and the subset
$$\calG_n(\K):=\bigl\{u_{P,Q} \mid (P,Q)\in \GL_n(\K)^2\bigr\} \cup \bigl\{v_{P,Q} \mid (P,Q)\in \GL_n(\K)^2\bigr\}$$
is clearly a subgroup of $\GL(\Mat_n(\K))$, which we will call the \textbf{Frobenius group}.
\vskip 3mm
\noindent Determining the endomorphisms of the vector space $\Mat_n(\K)$ which
preserve non-singularity has historically been one of the first successfull linear preserver problem, dating back to
Frobenius (\cite{Frobenius}), who successfully classified the linear preservers of the determinant,
and Dieudonn\'e (\cite{Dieudonne}), who classified the non-singular linear preservers of the general linear group.
Some improvements have been made later on the issue (cf. \cite{Marcus} and \cite{Botta}).
The following theorem is now folklore and essentially sums up what was known to this date:

\begin{theo}\label{classical}
\begin{enumerate}[(i)]
\item The group $\calG_n(\K)$ consists of all the endomorphisms $f$ of $\Mat_n(\K)$ such that $f(\GL_n(\K))=\GL_n(\K)$.
\item The group $\calG_n(\K)$ consists of all the endomorphisms $f$ of $\Mat_n(\K)$ such that $f^{-1}(\GL_n(\K))=\GL_n(\K)$.
\item The group $\calG_n(\K)$ consists of all the non-singular endomorphisms $f$ of $\Mat_n(\K)$ such that $f(\GL_n(\K)) \subset \GL_n(\K)$.
\item If $\K$ is algebraically closed, then $\calG_n$ consists of all the endomorphisms $f$ of $\Mat_n(\K)$ such that
$f(\GL_n(\K)) \subset \GL_n(\K)$.
\end{enumerate}
\end{theo}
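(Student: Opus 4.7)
The plan is to establish part (iii) as the principal case and then derive parts (i), (ii), and (iv) from it. Every part ultimately rests on Dieudonn\'e's classification of maximal singular subspaces of $\Mat_n(\K)$: such a subspace has dimension $n^2 - n$ and is of one of the two forms
\[
\calR_x := \{M \in \Mat_n(\K) : xM = 0\} \quad \text{or} \quad \calC_y := \{M \in \Mat_n(\K) : My = 0\},
\]
where $x$ is a nonzero row vector and $y$ a nonzero column vector. Given a non-singular $f$ with $f(\GL_n(\K)) \subset \GL_n(\K)$, the preimage under $f$ of any singular subspace is a singular subspace of the same dimension, so $f^{-1}$ permutes the maximal singular subspaces.

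For (iii), I would proceed in three stages. First, a direct computation gives $\dim(\calR_x \cap \calR_{x'}) = n^2 - 2n$ when $x, x'$ are independent, while $\dim(\calR_x \cap \calC_y) = n^2 - 2n + 1$ for all nonzero $x, y$ (the map $M \mapsto (xM, My)$ surjects onto the hyperplane $\{(u,v) \in \K^n \oplus \K^n : uy = xv\}$). Since $f^{-1}$ preserves these intersection dimensions, it acts family-wise: either both families $\{\calR_x\}, \{\calC_y\}$ are individually stabilized, or they are swapped; in the latter case, composing $f$ with a transpose, which lies in $\calG_n(\K)$, returns us to the former. Second, the induced bijection $\phi$ on lines defined by $f^{-1}(\calR_x) = \calR_{\phi(x)}$ is a collineation of $\mathbb{P}((\K^n)^*)$, since $\calR_{x_3} \supset \calR_{x_1} \cap \calR_{x_2}$ is equivalent to $x_3 \in \Vect(x_1, x_2)$; for $n \geq 3$, the fundamental theorem of projective geometry together with the $\K$-linearity of $f$ yields $\phi(x) = xP$ for some $P \in \GL_n(\K)$, and analogously $\psi(y) = Qy$ for some $Q \in \GL_n(\K)$. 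Third, the composition $g := u_{P,Q}^{-1} \circ f$ stabilizes every $\calR_x$ and every $\calC_y$, hence preserves the left and right kernels of every matrix; evaluated on rank-one matrices $uv^t$ this forces $g(uv^t) = \lambda(u,v)\, uv^t$, and the linearity of $g$ makes $\lambda(u,v)$ a universal constant, so $g$ is scalar and $f \in \calG_n(\K)$.

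Parts (i), (ii), and (iv) each reduce to (iii) once $f$ is shown non-singular. For (i), $\GL_n(\K)$ spans $\Mat_n(\K)$ as a vector space (every $E_{ij}$ is a difference of invertible matrices), so $f(\GL_n(\K)) = \GL_n(\K)$ forces $f$ to be surjective. For (ii), if $M_0 \in \Ker f$ is nonzero with $r := \rk M_0 \geq 1$, write $M_0 = P \begin{pmatrix} I_r & 0 \\ 0 & 0 \end{pmatrix} Q$ for suitable $P, Q \in \GL_n(\K)$; the hypothesis $f^{-1}(\GL_n(\K)) = \GL_n(\K)$ gives $PQ + \K M_0 \subset \GL_n(\K)$, yet $PQ - M_0$ is singular because $(PQ)^{-1} M_0 = Q^{-1} \begin{pmatrix} I_r & 0 \\ 0 & 0 \end{pmatrix} Q$ admits $1$ as an eigenvalue, a contradiction. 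Part (iv) combines (iii) with the paper's main reduction of the singular case to $n$-dimensional $\K$-division algebras, which are all trivial when $\K$ is algebraically closed and $n \geq 2$.

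The most delicate step is turning the set-theoretic action of $f^{-1}$ on $\{\calR_x\}, \{\calC_y\}$ into explicit linear maps $P, Q$: the fundamental theorem of projective geometry yields only a semi-linear map a priori, so the $\K$-linearity of $f$ itself must be invoked to exclude a non-trivial field automorphism, and the low-dimensional case $n = 2$ (outside the scope of that theorem) must be handled by direct computation.
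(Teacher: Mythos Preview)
Your treatment of (i), (ii), and (iv) coincides with the paper's almost line for line. For (iii), however, you and the paper diverge. The paper does not argue (iii) directly at all: it simply invokes the main theorem (Theorem~\ref{princtheo}), whose proof handles singular and non-singular $f$ simultaneously via Dieudonn\'e's theorem followed by explicit column-by-column normalisations (composing $f$ with suitable $u_{P,Q}$ until the first $p$ columns of $f(M)$ agree with those of $M$), and never appeals to projective geometry. Your route for (iii) --- separating the two families of maximal singular subspaces by the dimension of their pairwise intersections ($n^2-2n$ versus $n^2-2n+1$), then applying the fundamental theorem of projective geometry to the induced bijection on lines --- is the classical line of attack going back to Dieudonn\'e and to Marcus--Moyls. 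It is conceptually clean and produces $P$ and $Q$ in one stroke when it applies, but, as you yourself flag, it forces a separate ad hoc treatment of $n=2$ and a justification that the semi-linear map furnished by projective geometry is genuinely $\K$-linear. The paper's reduction argument sidesteps both issues and works uniformly for all $n\geq 2$ over an arbitrary field, at the cost of the longer case analysis carried out in Section~3.
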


\noindent Our main interest here is finding all the endomorphisms $f$ of $\Mat_n(\K)$
which stabilize $\GL_n(\K)$, i.e. $f(\GL_n(\K)) \subset \GL_n(\K)$.
The issue here is the existence of non-singular ones. Here are a few examples:

\begin{ex}
In $\Mat_2(\R)$, the endomorphism
$$\begin{bmatrix}
a & c \\
b & d
\end{bmatrix} \mapsto \begin{bmatrix}
a & -b \\
b & a
\end{bmatrix}$$
is singular and stabilizes $\GL_2(\R)$. Indeed, if $\begin{bmatrix}
a & c \\
b & d
\end{bmatrix} \in \GL_2(\R)$, then $(a,b)\neq (0,0)$ hence
$\begin{vmatrix}
a & -b \\
b & a
\end{vmatrix}=a^2+b^2>0$.
\end{ex}

\begin{ex}
In $\Mat_3(\Q)$, consider the companion matrix
$$A=\begin{bmatrix}
0 & 0 & 2 \\
1 & 0 & 0 \\
0 & 1 & 0
\end{bmatrix}.$$
Since the minimal polynomial $X^3-2$ of $A$ is irreducible over $\Q$, the subalgebra $\Q[A]$ is a field.
The singular endomorphism
$$M \longmapsto m_{1,1}.I_3+m_{2,1}.A+m_{3,1}.A^2$$
then clearly maps $\GL_3(\Q)$ into $\Q[A] \setminus \{0\}$ hence stabilizes $\GL_3(\Q)$.
\end{ex}

\noindent All those examples can be described in a normalized way.
We will need a few definitions first.

\begin{Def}
A linear subspace $V$ of $\Mat_n(\K)$ will be called \textbf{non-singular}  when
$V \setminus \{0\} \subset \GL_n(\K)$, and \textbf{full non-singular} when
in addition $\dim V=n$.
\end{Def}

\noindent Let $V$ be a full non-singular subspace of $\Mat_n(\K)$, with $n \geq 2$.
The projection onto the first column
$$\pi : \begin{cases}
V & \longrightarrow \Mat_{n,1}(\K) \\
M & \longmapsto C_1(M)
\end{cases}$$
is then a linear isomorphism. It follows that
$$\psi : \begin{cases}
\Mat_n(\K) & \longrightarrow \Mat_n(\K) \\
M & \longmapsto \pi^{-1}\bigl(C_1(M))\bigr.
\end{cases}$$
is a singular linear map which maps every non-singular matrix to a non-singular matrix.
More generally, given a non-zero vector $X \in \K^n$
and an isomorphism $\alpha : \K^n \overset{\simeq}{\rightarrow} V$,
the linear maps $M \mapsto \alpha(MX)$ and $M \mapsto \alpha(M^t\,X)$
are singular endomorphisms of $\Mat_n(\K)$ that stabilize $\GL_n(\K)$.

\vskip 2mm
\noindent
In this article, we will prove that the aforementioned maps are the only
singular preservers of $\GL_n(\K)$:

\begin{theo}[Main theorem]\label{princtheo}
Let $n \geq 2$.
Let $f$ be a linear endomorphism of $\Mat_n(\K)$ such that $f(\GL_n(\K)) \subset \GL_n(\K)$.
Then:
\begin{enumerate}[(i)]
\item either $f$ is bijective and then $f \in \calG_n(\K)$;
\item or there exists a full non-singular subspace $V$ of $\Mat_n(\K)$, an isomorphism
$\alpha : \K^n \overset{\simeq}{\rightarrow} V$ and a column $X \in \K^n \setminus \{0\}$ such that:
$$\forall M \in \Mat_n(\K), \; f(M)=\alpha(MX) \quad \text{or} \quad \forall M \in \Mat_n(\K), \; f(M)=\alpha(M^t\,X).$$
\end{enumerate}
As a consequence, if $f$ is singular, then $\im f$ is a full non-singular subspace of $\Mat_n(\K)$.
\end{theo}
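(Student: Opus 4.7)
The plan is to analyse $K := \Ker f$ and $V := \im f$ via Dieudonn\'e's classification of maximal singular subspaces of $\Mat_n(\K)$, treating the bijective and singular cases uniformly. First, $K$ is a space of singular matrices, since any $M \in K \cap \GL_n(\K)$ would give $f(M) = 0 \notin \GL_n(\K)$. The core step is then, for each nonzero $v \in \K^n$, to study the linear map $\phi_v : \Mat_n(\K) \to \K^n$, $M \mapsto f(M)v$. Its kernel consists of singular matrices, for $M \in \Ker \phi_v \cap \GL_n(\K)$ would force $f(M) \in \GL_n(\K)$ with $f(M)v = 0$, hence $v = 0$. Dieudonn\'e's bound then gives $\dim \Ker \phi_v \le n^2 - n$, while the inclusion $\im \phi_v \subset \K^n$ gives the opposite inequality. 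So $\Ker \phi_v$ is a \emph{maximal} singular subspace and $\phi_v$ is surjective; Dieudonn\'e's classification provides $w(v) \in \K^n \setminus \{0\}$ such that $\Ker \phi_v = \{M : Mw(v) = 0\}$ (``column type'') or $\Ker \phi_v = \{M : w(v)^T M = 0\}$ (``row type'').

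I would next show that all $\Ker \phi_v$ share a single type (up to replacing $f$ by $M \mapsto f(M^T)$, which still stabilises $\GL_n(\K)$; the mixed case is excluded by a dimension argument using $K \subset \Ker \phi_{v_1} \cap \Ker \phi_{v_2}$). Assuming the column type throughout, the identity $K = \bigcap_{v \ne 0} \Ker \phi_v$, which follows from $f(M) = 0 \Leftrightarrow f(M)v = 0 \text{ for all } v$, yields
\[
K = \{M \in \Mat_n(\K) : M|_{W_0} = 0\},
\]
where $W_0 := \Vect\{w(v) : v \ne 0\}$. Setting $d := \dim W_0$, one has $\dim V = nd$ and a factorisation $f(M) = \alpha(M|_{W_0})$ for a linear isomorphism $\alpha$ from $\operatorname{Hom}(W_0, \K^n)$ onto $V$, and the hypothesis on $f$ becomes: $\alpha(T) \in \GL_n(\K)$ for every injective $T \in \operatorname{Hom}(W_0, \K^n)$.

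The extremal values of $d$ yield the theorem's two conclusions. If $d = n$, then $K = 0$ and $f$ is bijective, so Theorem~\ref{classical}(iii) (or a direct re-run of the Dieudonn\'e analysis on $\alpha$) gives $f \in \calG_n(\K)$, namely (i). If $d = 1$, then $W_0 = \K X$ for some nonzero $X$, so $f(M) = \alpha(MX)$; the full non-singularity of $V$ is verified by the fact that for any nonzero $u \in \K^n$ one can choose $M \in \GL_n(\K)$ with $MX = u$, whence $\alpha(u) = f(M) \in \GL_n(\K)$. This gives (ii), the row-type case being symmetric and giving $f(M) = \alpha(M^T X)$. I expect the main obstacle to be ruling out the intermediate regime $2 \le d \le n - 1$: there $\dim V = nd > n$, so $V$ contains nonzero singular matrices, and one must produce an injective $T$ with $\alpha(T)$ singular to reach a contradiction. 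Over an algebraically closed field this follows from a dimension count, comparing the degree-$n$ hypersurface $\{\det \alpha = 0\}$ in $\operatorname{Hom}(W_0, \K^n) \simeq \K^{nd}$ (codimension $1$) with the rank-$<d$ locus (codimension $n - d + 1 > 1$); over a general $\K$ the delicate point is to guarantee $\K$-rational witnesses, which appears to require deeper use of the specific algebraic structure of $\alpha$.
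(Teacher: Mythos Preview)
Your overall architecture matches the paper's closely: study $\Ker\phi_v=f^{-1}\bigl(\calL_{\Vect(v)}(E)\bigr)$, use Dieudonn\'e's theorem to see that each is a maximal singular subspace of kernel or image type, argue that all share one type, take $W_0$ to be the span of the witnessing vectors $w(v)$, and split on $d=\dim W_0$. However, two of your steps are genuine gaps. The claim that all $\Ker\phi_v$ share a common type is not established: your ``dimension argument using $K\subset\Ker\phi_{v_1}\cap\Ker\phi_{v_2}$'' cannot work as stated, since $K$ may well be zero (it is, in the bijective case), and even a finer count gives $\dim\bigl(\calL_{D'}(E)\cap\calL^H(E)\bigr)=(n-1)^2$ while $\Ker\phi_{v_1}\cap\Ker\phi_{v_2}$, being the kernel of a linear map into $\K^{2n}$, is only known to have dimension $\ge n^2-2n$; no contradiction arises from dimensions alone. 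The paper explicitly flags this step as ``a lot harder'' in the possibly singular case and handles it by a direct construction: after normalisation, the first column of $f(M)$ depends isomorphically on the first column of $M$ while the second column of $f(M)$ depends isomorphically on the first \emph{row} of $M$; one then produces two affine hyperplanes in $\K^n$ with the impossible property that no line through the origin meets both.

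You rightly identify the intermediate range $2\le d\le n-1$ as the crux, but you only sketch it over algebraically closed fields; your codimension comparison does not supply $\K$-rational witnesses in general, and you say as much. The paper's argument here is short and works over any field: after the column reductions (so that $C_i(f(M))=C_i(M)$ for $i\le d$) one may further arrange that the witness for one additional line lies in $W_0$, giving $C_{d+1}(f(M))=\gamma(C_1(M))$ for some linear $\gamma$. Non-singularity of $f(M)$ then forces $(C_1,C_2,\gamma(C_1))$ to be linearly independent whenever $(C_1,C_2)$ is (here $d\ge 2$ supplies $C_2$ and $d<n$ supplies the extra column). Taking $C_2=\gamma(C_1)$ when that pair is independent, or any $C_2\notin\Vect(C_1)$ otherwise, yields an immediate contradiction. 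A minor additional point: invoking Theorem~\ref{classical}(iii) for the case $d=n$ is circular in this paper, which derives that statement from the present theorem; the bijective case is instead handled within the same framework, the accumulated reductions collapsing $f$ to the identity.
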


The rest of the paper is laid out as follows:
\begin{itemize}
\item we will first easily derive theorem \ref{classical} from theorem \ref{princtheo};
\item afterwards, we will prove theorem \ref{princtheo} by using a theorem of Dieudonn\'e on
the singular subspaces of $\Mat_n(\K)$;
\item in the last section, we will explain how the existence of full non-singular subspaces of $\Mat_n(\K)$
is linked to the existence of $n$-dimension division algebras over $\K$. This will prove fruitful in the case $\K=\R$.
\end{itemize}

\section{Some consequences of the main theorem}

Let us assume theorem \ref{princtheo} holds, and use it to prove the various statements in theorem \ref{classical}.
The case $n=1$ is trivial so we assume $n \geq 2$.
Remark first that every $f \in \calG_n(\K)$ is an automorphism of $\Mat_n(\K)$
and satisfies all the conditions $f(\GL_n(\K)) \subset \GL_n(\K)$, $f(\GL_n(\K))=\GL_n(\K)$ and
$f^{-1}(\GL_n(\K))=\GL_n(\K)$.

\vskip 3mm
\noindent Statement (iii) is straightforward by theorem \ref{princtheo}.

\begin{proof}[Proof of statement (i)]
Let $f : \Mat_n(\K) \rightarrow \Mat_n(\K)$ be a linear map such that $f(\GL_n(\K))=\GL_n(\K)$.
By the next lemma, $\GL_n(\K)$ generates the vector space $\Mat_n(\K)$, so $f$ must be onto,
hence non-singular, and statement (iii) then shows that $f \in \calG_n(\K)$.
\end{proof}

\begin{lemme}
The vector space $\Mat_n(\K)$ is generated by $\GL_n(\K)$.
\end{lemme}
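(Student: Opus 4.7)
The plan is to show that every canonical basis matrix $E_{i,j}$ of $\Mat_n(\K)$ lies in the linear span of $\GL_n(\K)$. Since the $E_{i,j}$ form a basis of $\Mat_n(\K)$, this will conclude the proof.

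For an off-diagonal position ($i\neq j$), I would invoke the elementary transvection identity
$$E_{i,j}=(I_n+E_{i,j})-I_n,$$
in which both $I_n+E_{i,j}$ and $I_n$ are invertible.

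For a diagonal position $E_{i,i}$, I would use the assumption $n\geq 2$ to pick some $j\neq i$ and perform the construction inside the principal submatrix indexed by $\{i,j\}$. Specifically, let $A$ and $B$ be the two matrices of $\Mat_n(\K)$ that agree with $I_n$ on every entry outside rows and columns $i,j$, and whose $\{i,j\}\times\{i,j\}$ principal blocks are
$$\begin{pmatrix}1 & 1 \\ 1 & 0\end{pmatrix}\quad\text{and}\quad\begin{pmatrix}0 & 1\\ 1 & 0\end{pmatrix}$$
respectively. A Laplace expansion along the unaffected rows shows that both determinants equal $-1$, which is nonzero in every field, so $A$ and $B$ are non-singular. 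A direct subtraction inside the $\{i,j\}$ block then gives $A-B=E_{i,i}$.

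The main obstacle I foresee is producing an argument uniform across all characteristics. A tempting approach is to write $E_{i,i}$ as a combination of invertible diagonal matrices (say, via $I_n$ and $I_n-2E_{i,i}$), but this fails in characteristic $2$, and even more dramatically over $\F_2$, where $I_n$ is the only invertible diagonal matrix. The $2\times 2$ block construction above circumvents this obstruction by using off-diagonal entries to manufacture non-singularity, and works verbatim over any field.
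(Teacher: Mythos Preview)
Your proof is correct and essentially the same as the paper's: the off-diagonal case is handled identically via $E_{i,j}=(I_n+E_{i,j})-I_n$, and for $E_{i,i}$ the paper likewise modifies a $2\times 2$ principal block, writing $E_{i,i}=I_n-(I_n+E_{i,j}+E_{j,i}-E_{i,i})+E_{i,j}+E_{j,i}$ (your direct two-term version $A-B$ is a minor streamlining of the same idea). One small point: the lemma is stated for all $n\geq 1$, so you should dispatch the trivial case $n=1$ before appealing to ``the assumption $n\geq 2$''.
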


\begin{proof}
The result is obvious when $n=1$. We now assume $n \geq 2$.
Set $(E_{i,j})_{1 \leq i,j\leq n}$ the canonical basis of $\Mat_n(\K)$.
Then $E_{i,j}=(I_n+E_{i,j})-I_n \in \Vect \GL_n(\K)$ for all $i \neq j$. \\
On the other hand, letting $i \in \lcro 1,n\rcro$ and choosing arbitrarily $j \in \lcro 1,n\rcro \setminus \{i\}$, we find that
$I_n+E_{i,j}+E_{j,i}-E_{i,i}$ is non-singular, therefore
$$E_{i,i}=I_n-(I_n+E_{i,j}+E_{j,i}-E_{i,i})+E_{i,j}+E_{j,i} \in \Vect \GL_n(\K).$$
This proves that $\Vect \GL_n(\K)=\Mat_n(\K)$.
\end{proof}

\begin{proof}[Proof of statement (ii)]
Let $f : \Mat_n(\K) \rightarrow \Mat_n(\K)$ be a linear map such that $f^{-1}(\GL_n(\K))=\GL_n(\K)$.
Assume that $f$ is not injective. Then there would be a non-zero matrix $A \in \Mat_n(\K)$ such that $f(A)=0$,
and it would follow that $A+P$ is non-singular for every non-singular $P$
(since then $f(A+P)=f(P) \in \GL_n(\K)$).
Then any matrix $B$ equivalent to $A$ would also verify this property, in particular
$B:=\begin{bmatrix}
I_r & 0 \\
0 & 0
\end{bmatrix}$, with $r:=\rk A>0$. However $B+(-I_n)$ is singular. This proves that $f$ is one-to-one, hence
non-singular, and since $f(\GL_n(\K)) \subset \GL_n(\K)$, statement (iii) shows that $f \in \calG_n(\K)$.
\end{proof}

\begin{proof}[Proof of statement (iv)]
Assume $\K$ is algebraically closed. Then every non-singular subspace of $\Mat_n(\K)$
has dimension at most $1$: indeed, given two non-singular $P$ and $Q$ in $\Mat_n(\K)$, the polynomial
$\det(P+x\,Q)=\det(Q)\,\det(PQ^{-1}+x.I_n)$ is non-constant and must then have a root in $\K$.
It follows from theorem \ref{princtheo} that every linear map $f : \Mat_n(\K) \rightarrow \Mat_n(\K)$
which stabilizes $\GL_n(\K)$ belongs to $\calG_n(\K)$.
\end{proof}

\section{Proof of the main theorem}

The basic idea is to use a theorem of Dieudonn\'e to study the
subspace $f^{-1}(V)$ when $V$ is a singular subspace of $\Mat_n(\K)$, i.e.
one that is disjoint from $\GL_n(\K)$. This is essentially the idea in the original proof of Dieudonn\'e
(\cite{Dieudonne}) but we will push it to the next level by not assuming that $f$ is one-to-one.

\subsection{A reduction principle}

Let $f : \Mat_n(\K) \rightarrow \Mat_n(\K)$ be a linear map which stabilizes $\GL_n(\K)$, and
let $(P,Q)\in \GL_n(\K)$.
Then any of the maps $u_{P,Q} \circ f$, $f \circ u_{P,Q}$ and $M \mapsto f(M)^t$
is linear and stabilizes $\GL_n(\K)$. Moreover, it is easily checked that if any one of them
is of one of the types listed in theorem \ref{princtheo}, then $f$ also is.
Our proof will make a great use of that remark.

\subsection{A review of Dieudonn\'e's theorem}

\begin{Def}
A linear subspace of a $\K$-algebra is called \textbf{singular} when it contains no invertible element.
\end{Def}

\noindent For example, given an $i \in \lcro 1,n\rcro$, the subset of matrices $\Mat_n(\K)$
which have null entries on the $i$-th column is an $(n^2-n)$-dimensional singular subspace.

\begin{Def}
Let $E$ be a finite-dimensional vector space, $H$ an hyperplane\footnote{Here, by a hyperplane (resp. a line), we mean 
a \emph{linear} subspace of codimension one (resp. of dimension one). When we will exceptionally have to deal with 
affine subspaces, we will always specify it.} 
of $E$ and $D$ a line of $E$. We define:
\begin{itemize}
\item $\calL_D(E)$ as the set of endomorphisms $u$ of $E$ such that $D \subset \Ker u$ ;
\item $\calL^H(E)$ as the set of endomorphisms $u$ of $E$ such that $\im u \subset H$.
\end{itemize}
Then $\calL_D(E)$ and $\calL^H(E)$ are both $(n^2-n)$-dimensional singular subspaces of $\calL(E)$.
The singular subspace $\calL_D(E)$ will be said to be of \textbf{kernel-type}, and the singular subspace
$\calL^H(E)$ of \textbf{image-type}.
\end{Def}

\noindent The following theorem of Dieudonn\'e (\cite{Dieudonne}), later generalized by Flanders (\cite{Flanders}) and
Meshulam (\cite{Meshulam}), will be used throughout our proof:

\begin{theo}[Dieudonn\'e's theorem]\label{singularsubspaces}
Let $E$ be an $n$-dimensional vector space over $\K$, and $V$ a singular subspace of $\calL(E)$.
Then:
\begin{enumerate}[(a)]
\item one has $\dim V \leq n^2-n$;
\item if $\dim V=n^2-n$, then we are in one of the mutually exclusive situations:
\begin{itemize}
\item there is one (and only one) hyperplane $H$ of $E$ such that $V=\calL^H(E)$;
\item there is one (and only one) line $D$ of $E$ such that $V=\calL_D(E)$.
\end{itemize}
\end{enumerate}
\end{theo}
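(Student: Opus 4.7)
The plan is to prove theorem \ref{singularsubspaces} by induction on $n = \dim E$, with the base case $n = 1$ being trivial (a singular subspace of $\K$ is $\{0\}$, of dimension $0 = 1^2 - 1$). For the inductive step I would exploit the fact that the bi-action $V \mapsto P V Q$ for $P,Q \in \GL(E)$ preserves both singularity and the dimension of $V$, and I would identify $\calL(E)$ with $\Mat_n(\K)$ via a chosen basis of $E$.

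First I would pick $A \in V$ of maximal rank $r$; since $V$ is singular, $r \leq n-1$. After a suitable bi-action, reduce to $A=\begin{pmatrix} I_r & 0 \\ 0 & 0 \end{pmatrix}$, and write each $B \in V$ in conforming block form $B=\begin{pmatrix} B_1 & B_2 \\ B_3 & B_4 \end{pmatrix}$ with $B_4 \in \Mat_{n-r}(\K)$. The key claim is that the image of the projection $V \to \Mat_{n-r}(\K), B \mapsto B_4$ is itself a singular subspace: indeed, if some $B_4$ were invertible, a Schur complement computation would show that for generic $t \in \K$ the matrix $A+tB$ has rank strictly greater than $r$, contradicting the maximality of $r$.

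By the inductive hypothesis applied to that image, its dimension is at most $(n-r)^2 - (n-r)$. A careful accounting of the remaining blocks $B_1, B_2, B_3$ (using both the rank constraint on $A+tB$ and a symmetric argument on the transposed subspace $V^t$, which swaps kernel- and image-type situations) then yields $\dim V \leq n^2 - n$. For the equality case I would trace through the inequalities to show that saturation forces a ``degeneracy in one direction'': either all elements of $V$ share a common line in their kernels (yielding $V = \calL_D(E)$) or they share a common hyperplane containing their images (yielding $V = \calL^H(E)$). Mutual exclusivity follows from a direct dimension count on $\calL_D(E) \cap \calL^H(E)$.

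The main obstacle lies in the ``generic $t$'' step. Over an infinite field, $\det(A+tB) = 0$ for all $t$ forces the vanishing of the relevant Schur complement polynomial and hence $B_4 = 0$ together with further structural constraints. Over a small finite field this fails directly: a polynomial in $t$ can vanish on all of $\K$ without being the zero polynomial, and extending scalars to $\K(t)$ or $\overline{\K}$ does not in general preserve singularity of $V$. Handling this case cleanly requires a refined combinatorial argument in the spirit of Meshulam, essentially a counting of rank-$r$ matrices inside $V$, and is the most delicate portion of the proof.
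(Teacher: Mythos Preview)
The paper does not give a proof of theorem \ref{singularsubspaces}: it is quoted there as a known result of Dieudonn\'e \cite{Dieudonne}, with pointers to the later refinements of Flanders \cite{Flanders} and Meshulam \cite{Meshulam}, and is then used as a black box throughout Section 3. There is therefore no proof in the paper to compare your proposal against.

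For what it is worth, your sketch is essentially the Flanders approach, and the Schur-complement step is sound over infinite fields: if $B_4$ is invertible then for $t\neq 0$ one has $\rk(A+tB)=(n-r)+\rk(I_r+tC)$ with $C=B_1-B_2B_4^{-1}B_3$, and $\det(I_r+tC)$ has constant term $1$, so some $t$ makes $A+tB$ invertible. You have correctly identified the genuine obstacle: over a small finite field this polynomial can vanish on all of $\K$, and extending scalars does not preserve singularity of $V$. Your suggested remedy via a Meshulam-style counting argument is the right direction, but it is only named, not carried out; likewise the equality-case analysis (``tracing through the inequalities'' to force a common kernel line or image hyperplane) hides nontrivial work, again especially over small fields. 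As an outline the proposal is reasonable; as a proof it is incomplete precisely at the points you yourself flag.
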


\subsection{Inverse image of a singular subspace of kernel-type}

In what follows, the algebras $\Mat_n(\K)$ will be canonically identified with the algebra $\calL(\K^n)$
of endomorphisms of $E:=\K^n$. Let $f : \Mat_n(\K) \rightarrow \Mat_n(\K)$
be an endomorphism which stabilizes $\GL_n(\K)$.
Notice that, given a line $D$ of $E$ and a non-zero vector $X \in D$, the singular subspace $\calL_D(E)$
is actually the kernel of the linear map $M \mapsto MX$ on $\Mat_n(\K)$.

\begin{lemme}
Let $X\in \K^n \setminus \{0\}$ and set $D:=\Vect(X)$. Then:
\begin{itemize}
\item either there is an hyperplane $H$ of $E$ such that
$f^{-1}(\calL_D(E))=\calL^H(E)$;
\item or there is a line $D'$ of $E$ such that
$f^{-1}(\calL_D(E))=\calL_{D'}(E)$.
\end{itemize}
Moreover, the linear map $M \mapsto f(M)X$ from $\Mat_n(\K)$ to $\K^n$ is onto.
\end{lemme}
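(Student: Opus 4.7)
The plan is to realize $f^{-1}(\calL_D(E))$ as a singular subspace of maximal dimension in $\Mat_n(\K)$ and then apply Dieudonn\'e's theorem directly.

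First I would introduce the linear map $g : \Mat_n(\K) \to \K^n$, $M \mapsto MX$, and observe (as the excerpt already notes) that $\calL_D(E) = \Ker g$. Since $X \neq 0$, the map $g$ is surjective, so $\dim \calL_D(E) = n^2 - n$. The pre-image $f^{-1}(\calL_D(E)) = \Ker(g \circ f)$ is a linear subspace of $\Mat_n(\K)$, and it is \emph{singular}: any element of it is mapped by $f$ into $\calL_D(E)$, which contains no invertible matrix, so an invertible element of $f^{-1}(\calL_D(E))$ would contradict $f(\GL_n(\K)) \subset \GL_n(\K)$.

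Next I would apply part (a) of Dieudonn\'e's theorem to conclude $\dim \Ker(g \circ f) \leq n^2 - n$. The rank-nullity theorem gives
\[
\dim \im(g \circ f) = n^2 - \dim \Ker(g \circ f) \geq n.
\]
Since $\im(g \circ f) \subset \K^n$, equality must hold, so $g \circ f$ is surjective; this is precisely the ``moreover'' assertion that $M \mapsto f(M)X$ is onto. As a by-product, $\dim f^{-1}(\calL_D(E)) = n^2 - n$, so the pre-image is a singular subspace of maximal dimension.

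I would then invoke part (b) of Dieudonn\'e's theorem, which tells us that $f^{-1}(\calL_D(E))$ is either of the form $\calL^H(E)$ for a unique hyperplane $H$ of $E$, or of the form $\calL_{D'}(E)$ for a unique line $D'$ of $E$. This is exactly the dichotomy claimed by the lemma. There is no real obstacle here: the argument is a clean two-line deduction from Dieudonn\'e's theorem once one has checked that $f^{-1}(\calL_D(E))$ is singular, which rests only on the hypothesis $f(\GL_n(\K)) \subset \GL_n(\K)$.
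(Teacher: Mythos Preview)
Your proposal is correct and follows essentially the same approach as the paper's proof: identify $f^{-1}(\calL_D(E))$ as the kernel of $M \mapsto f(M)X$, observe it is singular by the hypothesis on $f$, and combine the rank theorem with both parts of Dieudonn\'e's theorem to obtain $\dim f^{-1}(\calL_D(E)) = n^2-n$, the surjectivity of $M \mapsto f(M)X$, and the dichotomy. The only difference is a minor reordering of the steps.
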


\begin{proof}
Since the subspace $\calL_D(E)$ contains no non-singular matrix,
the assumption on $f$ garantees that $f^{-1}(\calL_D(E))$ is a singular subspace of $\Mat_n(\K)$.
Since $f^{-1}(\calL_D(E))$ is the kernel of $\alpha : M \mapsto f(M)X$, the rank theorem shows that
$\dim f^{-1}(\calL_D(E)) \geq n^2-n$. Theorem \ref{singularsubspaces} then shows our first statement,
hence another use of the rank theorem proves that $\dim f^{-1}(\calL_D(E))=n^2-n$ and $\alpha$ is onto.
\end{proof}

\noindent We will now show that the type of $f^{-1}\bigl(\calL_D(E)\bigr)$ (kernel or image) is actually independent
of the given line $D$. This will prove a lot harder than in Dieudonn\'e's original proof (\cite{Dieudonne}) because
$f$ is not assumed one-to-one.

\begin{prop}
Let $D_1$ and $D_2$ denote two distinct lines in $\K^n$.
Then the singular subspaces $f^{-1}(\calL_{D_1}(E))$ and $f^{-1}(\calL_{D_2}(E))$
are either both of kernel-type or both of image-type.
\end{prop}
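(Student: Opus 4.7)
The plan is to argue by contradiction: assume one of the two subspaces is of kernel-type and the other of image-type, say $f^{-1}(\calL_{D_1})=\calL_{D_1'}$ for some line $D_1'$ and $f^{-1}(\calL_{D_2})=\calL^{H_2}$ for some hyperplane $H_2$. To force a contradiction I will introduce a third, carefully chosen line and show that both shapes allowed by the preceding lemma for $f^{-1}(\calL_{D_3})$ clash with the standing assumptions.

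The natural third line is $D_3:=\Vect(X_1+X_2)$, where $X_i\in D_i\setminus\{0\}$; since $D_1\neq D_2$, the vectors $X_1$ and $X_2$ are linearly independent, so $D_3$ is a line distinct from $D_1$ and $D_2$. The key elementary observation is that $\Vect(X_1,X_2)=\Vect(X_1,X_1+X_2)$, which gives
$$\calL_{D_1}\cap\calL_{D_2}=\calL_{D_1}\cap\calL_{D_3}.$$
Applying $f^{-1}$ and using the known forms of $f^{-1}(\calL_{D_1})$ and $f^{-1}(\calL_{D_2})$, this turns into
$$\calL_{D_1'}\cap f^{-1}(\calL_{D_3})=\calL_{D_1'}\cap\calL^{H_2},$$
and both sides have dimension $(n-1)^2$ (the right-hand side consists of the matrices that factor as $E/D_1'\to H_2$).

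By the preceding lemma, $f^{-1}(\calL_{D_3})$ is either of kernel-type $\calL_{D_3'}$ or of image-type $\calL^{H_3}$. In the kernel-type case, $\calL_{D_1'}\cap\calL_{D_3'}$ has dimension $n^2-n$ if $D_3'=D_1'$ and $n^2-2n$ otherwise, neither of which equals $(n-1)^2=n^2-2n+1$ for $n\geq 2$: contradiction. In the image-type case the dimensions do match; but any rank-$(n-1)$ matrix with kernel $D_1'$ and image exactly $H_3$ lies in $\calL_{D_1'}\cap\calL^{H_3}=\calL_{D_1'}\cap\calL^{H_2}$, which forces $H_3\subset H_2$ and hence $H_3=H_2$.

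To finish the image-type sub-case I will exploit the additive relation $\alpha_3=\alpha_1+\alpha_2$ between the three surjections $\alpha_i:M\mapsto f(M)X_i$ supplied by the preceding lemma. Having $H_3=H_2$ means $\Ker\alpha_3=\Ker\alpha_2=\calL^{H_2}$, so $\alpha_1=\alpha_3-\alpha_2$ vanishes on $\calL^{H_2}$; comparing dimensions yields $\calL^{H_2}=\Ker\alpha_1=\calL_{D_1'}$, which is forbidden by the mutual exclusivity of the two types in Dieudonn\'e's theorem. The delicate point, as expected, is this image-type sub-case: the cheap dimension count that killed the first sub-case is silent there, and the contradiction has to be squeezed instead out of the additive relation among the $\alpha_i$.
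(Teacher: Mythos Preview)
Your argument is correct, and it follows a genuinely different route from the paper's. The paper proceeds by an explicit coordinate reduction: after composing $f$ on both sides with suitable Frobenius maps it assumes $D_1=D=\Vect(e_1)$, $D_2=\Vect(e_2)$ and $H=\Vect(e_2,\dots,e_n)$, so that $C_1(f(M))$ depends only on $C_1(M)$ via an isomorphism $\alpha$ and $C_2(f(M))$ depends only on the first row of $M$ via an isomorphism $\beta$. Restricting to matrices with top-left entry $1$, it shows that the two affine hyperplanes $C_1+\im\alpha'$ and $C_2+\im\beta'$ of $\K^n$ can contain no pair of collinear vectors, and then gets a contradiction by picking a line through a vector outside $\im\alpha'\cup\im\beta'$.

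Your approach is coordinate-free: you introduce the auxiliary line $D_3=\Vect(X_1+X_2)$, use the identity $\calL_{D_1}\cap\calL_{D_2}=\calL_{D_1}\cap\calL_{D_3}$ to transfer information to $f^{-1}(\calL_{D_3})$, rule out the kernel-type possibility by a clean dimension count, and in the image-type sub-case force $H_3=H_2$ and then exploit the additive relation $\alpha_3=\alpha_1+\alpha_2$ among the surjections $M\mapsto f(M)X_i$ to collapse $\calL^{H_2}$ onto $\calL_{D_1'}$, contradicting the mutual exclusivity in Dieudonn\'e's theorem. This is shorter and more intrinsic; it avoids the normalisation step and the affine-hyperplane endgame entirely. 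The paper's argument, on the other hand, is self-contained at the level of matrix entries and dovetails more directly with the explicit column-by-column reductions used in the subsequent sections.
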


\begin{proof}
We will use a \emph{reductio ad absurdum} by assuming there is a line $D$ and an hyperplane $H$ of $E$ such that
$f^{-1}(\calL_{D_1}(E))=\calL_D(E)$ and $f^{-1}(\calL_{D_2}(E))=\calL^H(E)$.
By right-composing $f$ with $u_{P,Q}$ for some well-chosen non-singular $P$ and $Q$,
and then left-composing $u_{I_n,R}$ for some well-chosen non-singular $R$,
we are reduced to the case $D_1=D=\Vect(e_1)$, $D_2=\Vect(e_2)$ and $H=\Vect(e_2,\dots,e_n)$,
where $(e_1,\dots,e_n)$ denotes the canonical basis of $\K^n$.
Then $f$ has the following properties:
\begin{itemize}
\item Any matrix with first column $0$ is mapped by $f$ to a matrix
with first column $0$, and $M \mapsto C_1(f(M))$ is onto.
\item Any matrix with first line $0$ is mapped by $f$ to a matrix
with second column $0$, and $M \mapsto C_2(f(M))$ is onto.
\end{itemize}
By the factorization theorem for linear maps (\cite{Greub} proposition I p.45), we deduce that there are two isomorphisms
$\alpha : \Mat_{n,1}(\K) \overset{\simeq}{\longrightarrow} \Mat_{n,1}(\K)$ and
$\beta : \Mat_{1,n}(\K) \overset{\simeq}{\longrightarrow} \Mat_{n,1}(\K)$
such that, for every
$$M=\begin{bmatrix}
C & \cdots
\end{bmatrix}=\begin{bmatrix}
L \\
\vdots
\end{bmatrix} \quad \text{with $C \in \Mat_{n,1}(\K)$ and $L \in \Mat_{1,n}(\K)$,}$$
one has
$$f(M)=\begin{bmatrix}
\alpha(C) & \beta(L) & \cdots
\end{bmatrix}.$$
Set now $C_1:=\alpha \begin{bmatrix}
1 \\
0 \\
\vdots \\
0
\end{bmatrix}$ and $C_2:=\beta \begin{bmatrix}
1 & 0 & \cdots & 0
\end{bmatrix}$. We then recover two injective linear maps
$\alpha' : \Mat_{n-1,1}(\K) \hookrightarrow \Mat_{n,1}(\K)$ and
$\beta' : \Mat_{1,n-1}(\K) \hookrightarrow \Mat_{n,1}(\K)$ such that for every
$M=\begin{bmatrix}
1 & L \\
C & ?
\end{bmatrix} \in \Mat_n(\K)$ with first coefficient $1$, one has
$$f(M)=\begin{bmatrix}
C_1+\alpha'(C) & C_2+\beta'(L) & ?
\end{bmatrix}.$$
Let $(L,C)\in \Mat_{1,n-1}(\K) \times \Mat_{n-1,1}(\K)$.
Notice then that there exists an $N \in \Mat_{n-1}(\K)$ such that
$M=\begin{bmatrix}
1 & L \\
C & N
\end{bmatrix}$ is non-singular. Indeed, the matrix $N:=CL+I_{n-1}$ fits this condition
(remark that
$\begin{bmatrix}
1 & L \\
C & CL+I_{n-1}
\end{bmatrix}=\begin{bmatrix}
1 & 0 \\
C & I_{n-1}
\end{bmatrix}\begin{bmatrix}
1 & L \\
0 & I_{n-1}
\end{bmatrix}$).
For any such $M$, the matrix $f(M)$ must then be non-singular,
which proves that $C_1+\alpha'(C)$ and $C_2+\beta'(L)$ are linearly independent. \\
However, this has to hold for every pair $(L,C)\in \Mat_{1,n-1}(\K) \times \Mat_{n-1,1}(\K)$.
Therefore no vector in the affine hyperplane $\calH_1:=C_1+\im \alpha'$
is colinear to a vector in the affine hyperplane $\calH_2:=C_2+\im \beta'$.
There finally lies a contradiction: indeed, should we choose a vector $x_0$ in $E \setminus (\im \alpha' \cup \im \beta')$
(classically, such a vector exists because $E$ is never the union of two strict linear subspaces),
then the line $\Vect(x_0)$ would have to intersect both hyperplanes $\calH_1$ and $\calH_2$.
\end{proof}

\noindent We may actually assume there is some line $D$ such that $f^{-1}(\calL_D(E))$
has kernel-type, because, if not, we may replace $f$ with $M \mapsto f(M^t)$.
Therefore we may now assume, without loss of generality:
\begin{center}
For every line $D$ of $E$, there is a line $D'$ of $E$ such that $f^{-1}(\calL_D(E))=\calL_{D'}(E)$.
\end{center}

\subsection{Reducing the problem further}\label{reduc}

We let here $(e_1,\dots,e_n)$ denote the canonical basis of $E=\K^n$
and set $D_i:=\Vect(e_i)$ for every $i \in \lcro 1,n\rcro$.
We now have $n$ lines $D'_1,\dots,D'_n$ in $E$ such that
$\forall i \in \lcro 1,n\rcro, \; f^{-1}(\calL_{D_i}(E))=\calL_{D'_i}(E)$.
In every line $D'_i$, we choose a non-zero vector $x_i$. \\
Set $F:=\Vect(x_1,\dots,x_n)$ and $p:=\dim F$.
From $(x_1,\dots,x_n)$ can be extracted a basis of $F$.
\begin{itemize}
\item Replacing $f$ with $M \mapsto f(M)P$ for some suitable
permutation matrix $P$, we may assume $(x_1,\dots,x_p)$ is a basis of $F$.
\item Replacing $f$ with $M \mapsto f(MP)$ for some non-singular $P \in \GL_n(\K)$,
we may finally assume $(x_1,\dots,x_p)=(e_1,\dots,e_p)$, so that $F=\Vect(e_1,\dots,e_p)$.
\end{itemize}
After these reductions, let us restate some of the assumptions on $f$:
for every $i \in \lcro 1,p\rcro$ and every $M \in \Mat_n(\K)$,
if the $i$-th column of $M$ is $0$, then the $i$-th column of $f(M)$ is also $0$,
and $N \mapsto C_i(f(N))$ is onto (from $\Mat_n(\K)$ to $\Mat_{n,1}(\K)$).
By the factorization theorem for linear maps, we recover $p$ automorphisms
$\alpha_1,\dots,\alpha_p$ of $\Mat_{n,1}(\K)$ such that, for every
$M=\begin{bmatrix}
C_1 & C_2 & \cdots & C_p & ?
\end{bmatrix}$ in $\Mat_n(\K)$, one has:
$$f(M)=\begin{bmatrix}
\alpha_1(C_1) & \alpha_2(C_2) & \cdots & \alpha_p(C_p) & ?
\end{bmatrix}.$$

\vskip 2mm
\noindent We will now reduce the previous situation to the case $\alpha_1=\alpha_2=\cdots=\alpha_p=\id$.

\begin{lemme}\label{libre}
Under the previous assumptions, let $(C_1,\dots,C_p)\in \Mat_{n,1}(\K)^p$ be a linearly independent $p$-tuple.
Then $\bigl(\alpha_1(C_1),\dots,\alpha_p(C_p)\bigr)$ is linearly independent.
\end{lemme}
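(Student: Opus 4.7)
The plan is to reduce the lemma directly to the preservation hypothesis $f(\GL_n(\K))\subset \GL_n(\K)$: since $(C_1,\dots,C_p)$ is a linearly independent $p$-tuple in $\K^n$ with $p\leq n$, I can complete it to a basis $(C_1,\dots,C_n)$ of $\K^n$ and then read off the linear independence of the $\alpha_i(C_i)$'s from the first $p$ columns of a non-singular matrix in the image of $f$.

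More concretely, first I would pick arbitrary columns $C_{p+1},\dots,C_n\in \Mat_{n,1}(\K)$ such that $(C_1,\dots,C_n)$ forms a basis of $\K^n$; this is possible precisely because $p\leq n$ and $(C_1,\dots,C_p)$ is linearly independent. Then I would form the matrix
$$M:=\begin{bmatrix} C_1 & C_2 & \cdots & C_n\end{bmatrix}\in \GL_n(\K),$$
to which the preservation hypothesis applies: $f(M)\in \GL_n(\K)$. By the explicit form of $f$ recalled just before the lemma, the first $p$ columns of $f(M)$ are exactly $\alpha_1(C_1),\alpha_2(C_2),\dots,\alpha_p(C_p)$, and the remaining $n-p$ columns are unspecified. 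Since the $n$ columns of the non-singular matrix $f(M)$ are linearly independent in $\K^n$, the first $p$ of them are linearly independent as well, which is precisely the claim.

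I do not expect any genuine obstacle here: the lemma is essentially a packaging of the preservation hypothesis through the partial factorization established in the preceding paragraph. The only minor point worth double-checking is that $p\leq n$, which is immediate from $F=\Vect(x_1,\dots,x_n)\subseteq \K^n$. No appeal to Dieudonn\'e's theorem or to the previous propositions of the section is needed at this stage; the assumption that the first $p$ automorphisms $\alpha_i$ act column by column on the first $p$ columns of $f(M)$, combined with one well-chosen non-singular $M$, does the whole job.
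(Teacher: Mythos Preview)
Your proposal is correct and is essentially identical to the paper's own proof: extend $(C_1,\dots,C_p)$ to a basis, form the corresponding non-singular matrix $M$, and read off the linear independence of $\alpha_1(C_1),\dots,\alpha_p(C_p)$ from the first $p$ columns of the non-singular matrix $f(M)$. The paper states this in two lines; you have simply unpacked the same argument in more detail.
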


\begin{proof}
Indeed, $(C_1,\dots,C_p)$ can be extended into a basis
$(C_1,\dots,C_n)$ of $\Mat_{n,1}(\K)$.
Since $M:=\begin{bmatrix}
C_1 & \cdots & C_n
\end{bmatrix}$ is non-singular, $f(M)$ also is, which proves our claim.
\end{proof}

\vskip 2mm
\noindent
Define then $P \in \GL_n(\K)$ as the matrix canonically associated to $\alpha_1$. Then
we may replace $f$ with $f \circ u_{P^{-1},I_n}$, which changes no previous assumption.
In this case, $\alpha_1=\id_{\Mat_{n,1}(\K)}$.
We claim then that $\alpha_2,\dots,\alpha_p$ are scalar multiples of the identity.
Consider $\alpha_2$ for exemple. Since any linearly independent pair $(C_1,C_2)$ in $\Mat_{n,1}(\K)$
can be extended into a linearly independent $p$-tuple in $\Mat_n(\K)$, lemma \ref{libre} shows $(C_1,\alpha_2(C_2))$
must be linearly independent. It follows that for every $C \in \Mat_{n,1}(\K)$, the matrices $C$ and $(\alpha_2)^{-1}(C)$
must be linearly dependent. Classically, this proves $(\alpha_2)^{-1}$ is a scalar multiple of $\id$,
hence $\alpha_2$ also is. The same line of reasoning also shows that
this is true of $\alpha_3,\dots,\alpha_p$.

\vskip 2mm
\noindent We thus find non-zero scalars $\lambda_2,\dots,\lambda_p$ such that, for every
$M=\begin{bmatrix}
C_1 & C_2 & \cdots & C_p & ?
\end{bmatrix}$ in $\Mat_n(\K)$, one has
$f(M)=\begin{bmatrix}
C_1 & \lambda_2.C_2 & \cdots & \lambda_p.C_p & ?
\end{bmatrix}$.

\vskip 2mm
\noindent By replacing $f$ with $f \circ u_{I_n,P^{-1}}$ for
$P:=D(1,\lambda_2,\dots,\lambda_p,1,\dots,1)$, we are thus reduced to the following situation: \\
For every $M=\begin{bmatrix}
C_1 & C_2 & \cdots & C_p & ?
\end{bmatrix}$ in $\Mat_n(\K)$, one has
$f(M)=\begin{bmatrix}
C_1 & C_2 & \cdots & C_p & ?
\end{bmatrix}$.

\subsection{The coup de gr\^ace}

\begin{itemize}
\item If $p=n$, then we are reduced to the case $f=\id_{\calM_n(\K)}$,
in which $f=u_{I_n,I_n}.$
\item Assume $p=1$. \\
Then $\Ker f$ is the set of matrices with $0$ as first column.
Indeed, since $\underset{k=1}{\overset{n}{\bigcap}}\calL_{D_k}(E)=\{0\}$, we find
$$\Ker f=\underset{k=1}{\overset{n}{\bigcap}}f^{-1}\bigl(\calL_{D_k}(E)\bigr)
=\underset{k=1}{\overset{n}{\bigcap}}\calL_{D'_k}(E)=\calL_{D_1}(E).$$
By the factorization theorem for linear maps, we find a linear injection $g : \K^n \hookrightarrow \Mat_n(\K)$
such that $\forall M \in \calM_n(\K), \; f(M)=g(Me_1)$, where $e_1=\begin{bmatrix}
1 \\
0 \\
\vdots \\
0
\end{bmatrix}$.
Notice then that $\im g=\im f$ and $\im g$ is an $n$-dimensional linear subspace of $\Mat_n(\K)$. \\
Finally, $\im g$ is actually non-singular: indeed, for every $x \in \K^n \setminus \{0\}$, there exists $M \in \GL_n(\K)$ such that
$Me_1=x$, hence $g(x)=f(M)$ is non-singular. We have thus proven that $f$ verifies condition (ii) in theorem \ref{princtheo}.
\end{itemize}
\vskip 2mm
Our proof of theorem \ref{princtheo} will then be finished should we prove that
only the above two cases can arise. Assume then $1<p<n$ and consider the vector
$x_{p+1}$. Notice that we now simply have $f^{-1}(\calL_D(E))=\calL_D(E)$ for any line $D$ of $F=\Vect(e_1,\dots,e_p)$.
Moreover, the situation is left unchanged should we choose a non-singular $P \in \GL_p(\K)$, set
$Q:=\begin{bmatrix}
P & 0 \\
0 & I_{n-p}
\end{bmatrix}$ and replace $f$ with $u_{I_n,P^{-1}} \circ f \circ u_{I_n,P}$.
It follows that we may actually assume $D'_{p+1}=D_1$ in addition to the previous assumptions
(at this point, the reader must check that none of the previous reductions changes the lines $D_{p+1},\dots,D_n$).

\vskip 2mm
\noindent
Another use of the factorization theorem then helps us find an endomorphism $\alpha$ of $\Mat_{n,1}(\K)$
such that, for every $M=\begin{bmatrix}
C_1 & C_2 & \cdots & C_p & ?
\end{bmatrix}$ in $\calM_n(\K)$, one has
$f(M)=\begin{bmatrix}
C_1 & C_2 & \cdots & C_p & \alpha(C_1) & ?
\end{bmatrix}$.
Borrowing an argument from section \ref{reduc}, we deduce that for any linearly independent pair $(C_1,C_2)$ in $\Mat_{n,1}(\K)$, the triple
$(C_1,C_2,\alpha(C_1))$ is also linearly independent (this is where the assumption $1<p<n$ comes into play).
Clearly, this is absurd: indeed, choose $C_1$ arbitrarily in $\Mat_{n,1}(\K) \setminus \{0\}$, then
$C_2:=\alpha(C_1)$ if $(C_1,\alpha(C_1))$ is linearly independent, and choose arbitrarily
$C_2$ in $\Mat_{n,1}(\K) \setminus \Vect(C_1)$ if not (there again, we use $p \geq 2$).
This contradiction shows $p \in \{1,n\}$, which completes our proof of theorem \ref{princtheo}.

\section{A link with division algebras}

We will show here how the full non-singular subspaces of $\Mat_n(\K)$ are connected to division algebra over $\K$.
Let us recall first a few basic facts about them.

\begin{Def}
A \textbf{division algebra} over $\K$ is a $\K$-vector space $D$ equipped with a bilinear map
$\star : D \times D \rightarrow D$ such that $x \mapsto a\star x$ and $x \mapsto x \star a$ are automorphisms of $D$
for every $a \in D \setminus \{0\}$.
\end{Def}

\noindent Of course, every field extension of $\K$, and more generally every skew-field extension of $\K$ is a division algebra over $\K$.
There are however non-associative division algebras, the most famous example being the algebra of octonions (see
\cite{Conway} for an extensive treatment on them).

\begin{Rems}
\begin{enumerate}[(a)]
\item Note that associativity is not required on the part of $\star$ !
\item If $D$ is finite-dimensional, then the latter condition in the definition of a division algebra
is verified if and only if $x \mapsto a\star x$
is bijective for every $a \in D \setminus \{0\}$. The data of $\star$ is then equivalent to that of a linear map
$$\alpha : D \longrightarrow \calL(D)$$
which maps $D \setminus \{0\}$ into $\GL(D)$
(indeed, to such a map $\alpha$, we naturally associate the pairing $(a,b)\mapsto \alpha(a)[b]$).
\end{enumerate}
\end{Rems}

\vskip 3mm
\noindent The correspondence between full non-singular subspaces of $\GL_n(\K)$ and division algebras over $\K$
is now readily explained:
\begin{itemize}
\item Let $V$ be a full non-singular subspace $V$ of $\GL_n(\K)$. Setting a basis of $V$, we define an isomorphism
$\theta : \K^n \overset{\simeq}{\rightarrow} V$ which induces an isomorphism of algebras $\overline{\theta} : \Mat_n(\K) \overset{\simeq}{\rightarrow} \calL(V)$.
Restricting $\overline{\theta}$ to $V$ then gives rise to a division algebra structure on $V$.
\item Conversely, given a division algebra $D$ with structural map
$\alpha : D \rightarrow \calL(D)$, we can choose a basis of $D$, which defines an algebra isomorphism
$\psi : \calL(D) \overset{\simeq}{\rightarrow} \Mat_n(\K)$, and then associate to $D$ the full non-singular subspace
$\psi(\alpha(D))$ of $\Mat_n(\K)$.
\end{itemize}
Working with the canonical basis of $\K^n$, we have just established a bijective correspondence between
the set of structures of division algebras on $\K^n$ (which extend its canonical vector space structure), and
the set of full non-singular subspaces of $\Mat_n(\K)$.

\vskip 2mm
\noindent By combining our main theorem with the Bott-Milnor-Kervaire theorem on division algebras over the real numbers
(cf. \cite{BottMilnor} and \cite{Kervaire}), this yields:

\begin{prop}
Let $n \in \N \setminus \{2,4,8\}$. Then every
linear endomorphism $f$ of $\Mat_n(\R)$ which stabilizes $\GL_n(\R)$
belongs to the Frobenius group $\calG_n(\R)$.
\end{prop}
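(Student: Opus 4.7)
The plan is to apply the main theorem (Theorem \ref{princtheo}) and combine it with the division-algebra correspondence just established, so that the Bott-Milnor-Kervaire theorem can deliver the contradiction in the singular case. The case $n=1$ is trivial since $\Mat_1(\R)=\R$ and a linear map $\R \to \R$ preserves $\R\setminus\{0\}$ iff it is nonzero. So I may assume $n\geq 2$ and $n \notin \{2,4,8\}$, which puts the hypothesis of Theorem \ref{princtheo} at my disposal.

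Let $f$ be a linear endomorphism of $\Mat_n(\R)$ with $f(\GL_n(\R))\subset \GL_n(\R)$. By Theorem \ref{princtheo}, either $f$ is bijective, in which case $f \in \calG_n(\R)$ and we are done, or else there is a full non-singular subspace $V \subset \Mat_n(\R)$, an isomorphism $\alpha : \R^n \overset{\simeq}{\rightarrow} V$ and a nonzero $X \in \R^n$ such that $f(M)=\alpha(MX)$ or $f(M)=\alpha(M^t X)$ for every $M$. In particular the existence of such an $f$ forces the existence of a full non-singular subspace of $\Mat_n(\R)$.

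At this point I invoke the correspondence recalled just before the proposition: a full non-singular subspace of $\Mat_n(\R)$ yields a structure of $n$-dimensional division algebra over $\R$ on $\R^n$ (by picking any basis of $V$ and transporting the multiplication coming from the identification $\Mat_n(\R)\simeq \calL(\R^n)$). Hence the singular case of Theorem \ref{princtheo} cannot occur unless there exists a (not necessarily associative) division algebra of dimension $n$ over $\R$.

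The Bott-Milnor-Kervaire theorem \cite{BottMilnor,Kervaire} asserts that such a real division algebra exists only when $n \in \{1,2,4,8\}$. Since I assumed $n \notin \{2,4,8\}$ and $n \geq 2$, case (ii) of Theorem \ref{princtheo} is ruled out, so $f$ must be bijective and hence lie in $\calG_n(\R)$. There is no real obstacle here beyond correctly quoting the external Bott-Milnor-Kervaire result; the whole work has already been done in Theorem \ref{princtheo} and in the translation between full non-singular subspaces and division algebra structures.
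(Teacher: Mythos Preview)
Your argument is correct and is exactly the approach the paper intends: the proposition is stated without a separate proof, merely as the immediate consequence of Theorem~\ref{princtheo}, the correspondence between full non-singular subspaces and $n$-dimensional division algebras, and the Bott--Milnor--Kervaire theorem. You have simply written out these details (including the trivial case $n=1$, which is needed since Theorem~\ref{princtheo} assumes $n\geq 2$).
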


\section*{Acknowledgements}

I would like to thank Fran\c cois Lussier for insisting I should tackle the problem
considered here.

\end{document}